
\documentclass[english,10pt]{amsart}
\usepackage[english]{babel}

\usepackage[matrix,arrow]{xy}
\xyoption{all}
\usepackage{amscd,amssymb,amsfonts,amsmath}

\usepackage{graphics}
\usepackage{epsfig}
\usepackage{mathrsfs}

\unitlength=1cm

\newcommand\mypagesizel{
\textwidth= 6.5in
\textheight=9in
\voffset-.55in
\hoffset -0.75in
\marginparwidth=56pt
}

\newcommand{\Supp}{\textup{Supp}}
\newcommand{\Exc}{\textup{Exc}}

\newcommand{\Sing}{\textup{Sing}}
\newcommand{\CS}{\textup{CS}}
\newcommand{\codim}{\textup{codim}}
\newcommand{\Res}{\textup{Res}}
\newcommand{\modulo}{\textup{mod}}

\newcommand{\discrep}{\textup{discrep}}

\renewcommand{\phi}{\varphi}

\renewcommand{\le}{\leqslant}
\renewcommand{\ge}{\geqslant}

\newcommand{\bC}{\textup{\textbf{C}}}

\newcommand{\bP}{\mathbb{P}}
\newcommand{\bQ}{\mathbb{Q}}
\newcommand{\bZ}{\mathbb{Z}}

\newcommand{\sL}{\mathscr{L}}
\newcommand{\sM}{\mathscr{M}}
\newcommand{\sN}{\mathscr{N}}
\newcommand{\sO}{\mathscr{O}}

\newcommand{\sQ}{\mathscr{Q}}

\newcommand{\Der}{\textup{Der}}

\mypagesizel

\newtheorem{thm}{Theorem}[section]

\newtheorem{lemma}[thm]{Lemma}
\newtheorem{cor}[thm]{Corollary}

\newtheorem{prop}[thm]{Proposition}

\newtheorem*{thm*}{Theorem}
\theoremstyle{definition}
\newtheorem{defn}[thm]{Definition}

\newtheorem{say}[thm]{}
\newtheorem{exmp}[thm]{Example}

\newtheorem{defn-thm}[thm]{Definition-Theorem} 
\newtheorem{defn-lemma}[thm]{Definition-Lemma}

\newtheorem{rem}[thm]{Remark}

\theoremstyle{remark}

\newtheorem*{not-and-def}{Notation and definitions}

\numberwithin{equation}{section}

\begin{document}

\title[The Zariski-Lipman conjecture for log canonical spaces]{The Zariski-Lipman conjecture for log canonical spaces}

\author{St\'ephane \textsc{Druel}}

\address{St\'ephane Druel: Institut Fourier, UMR 5582 du
  CNRS, Universit\'e Grenoble 1, BP 74, 38402 Saint Martin
  d'H\`eres, France} 

\email{druel@ujf-grenoble.fr}

\thanks{The author was 
partially supported by the project \textit{CLASS} of
Agence nationale de la recherche, under agreement 
ANR-10-JCJC-0111}

\subjclass[2010]{14B05, 32B05}

\begin{abstract}
In this paper we prove the Zariski-Lipman conjecture for log canonical spaces.
\end{abstract}

\maketitle

{\small\tableofcontents}

\section{Introduction}

The Zariski-Lipman conjecture asserts that a complex variety $X$ with a locally free tangent
sheaf $T_X$ is necessarily smooth (\cite{lipman}).  The conjecture has been shown
in special cases; for hypersurfaces or homogeneous complete intersections
(\cite{hochster, MR0306172}), 
for local complete intersections (\cite{kallstrom}),
for isolated singularities in higher-dimensional
varieties \cite[Sect.~1.6]{ss85}, and more generally, for varieties whose singular
locus has codimension at least $3$ \cite{flenner}.

The Minimal Model Program was initiated in the early eighties as an attempt to extend the birational classification of surfaces to higher dimensions.
It became clear that singularities are unavoidable in the birational classification 
of higher dimensional complex projective varieties;
this led to the development of a powerful theory of  singularities of pairs
(see Definition~\ref{Singularities of pairs} for basic notions, such as klt and log canonical singularities). The class of log canonical singularities is the
largest class of singularities where the conjectures of the Minimal Model Program are expected to hold.

The Zariski-Lipman conjecture has been shown for klt spaces in \cite{greb_kebekus_kovacs_peternell10} (see also
\cite[Corollary 5.7]{fano_foliation}).
In this paper 
we prove the conjecture for log canonical
spaces. Notice that log canonical spaces in general have singularities in codimension $2$. 

\begin{thm}[Zariski-Lipman conjecture for log canonical spaces]\label{thm:main}
  Let $X$ be a log canonical space such that the tangent sheaf $T_X$ is locally
  free. Then $X$ is smooth.
\end{thm}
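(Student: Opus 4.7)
The plan is to reduce the problem, via the known partial results and Bertini-type cuts, to a Gorenstein strictly log canonical surface singularity, use an extension theorem for reflexive differentials to lift vector fields as logarithmic vector fields on a log resolution, and derive a contradiction via a determinant calculation on the resolution followed by a direct analysis of the exceptional configuration.

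Since $T_X$ is locally free, $\det T_X \cong \cO_X(-K_X)$ is invertible and hence $X$ is Gorenstein. The conjecture is known when $\codim_X \Sing X \ge 3$ by \cite{flenner} and when $X$ is klt by \cite{greb_kebekus_kovacs_peternell10}, so I may assume $X$ is Gorenstein, strictly log canonical, with $\Sing X$ of pure codimension two. A standard Bertini argument via general complete intersection cuts (log canonicity is stable under general hyperplane sections, and the conormal sequence $0 \to T_S \to T_X|_S \to N_{S/X} \to 0$ with trivial $N_{S/X}$ near the singularity preserves local freeness of $T_S$) reduces to the case $\dim X = 2$, with an isolated Gorenstein strictly log canonical singularity at $p$ at which $T_X$ is free of rank two.

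Let $\pi \colon \tilde X \to X$ be the minimal log resolution with reduced exceptional divisor $E = \sum_i E_i$. The extension theorem of Greb--Kebekus--Kov\'acs--Peternell, in its log canonical version, yields $\Omega^{[1]}_X = \pi_*\Omega^1_{\tilde X}(\log E)$, and dualising produces a canonical injection $\phi \colon \pi^* T_X \hookrightarrow T_{\tilde X}(-\log E)$ that is an isomorphism over $\tilde X \setminus E$. Writing $K_{\tilde X} = \pi^*K_X + \sum_i a_i E_i$ with $a_i \ge -1$, the determinant identities
\[
\det \pi^* T_X = -\pi^* K_X, \qquad \det T_{\tilde X}(-\log E) = -\pi^* K_X - \sum_i (a_i+1) E_i,
\]
combined with the fact that $\phi$ is an injection of locally free sheaves of equal rank, force $-\sum_i (a_i+1)E_i$ to be effective. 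Hence $a_i = -1$ for every $i$, so $\phi$ is an isomorphism and $T_{\tilde X}(-\log E) \cong \pi^* T_X$ is trivial on a neighbourhood of $E$.

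The vanishing of all log discrepancies, together with adjunction $(K_{\tilde X}+E)|_{E_i} = K_{E_i} + (E-E_i)|_{E_i}$ and the $\pi$-triviality of $-\pi^*K_X$ on each $E_i$, forces $E$ to be either a single smooth elliptic curve (simple elliptic case) or a cycle of smooth rational curves (cusp case). In each case I would derive the desired contradiction by combining two facts: the global triviality of $T_{\tilde X}(-\log E)$ on a neighbourhood of the compact cycle $E$, and the fact that the two trivialising vector fields must vanish at $p$ in the Zariski sense, because every vector field on $X$ is tangent to $\Sing X = \{p\}$. The main obstacle is precisely this final step: the residue short exact sequences along each $E_i$ are numerically consistent with the triviality conclusion, so the contradiction must be drawn from more refined data, most likely by analysing the global logarithmic flow of the two vector fields along the compact cycle $E$ together with the linearisations of these vector fields in the Zariski tangent space at $p$, whose structure is dictated by the explicit cone or cusp geometry of the germ. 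A secondary difficulty lies in the dualisation in the second step, which must be handled with care in the non-klt setting since log canonical singularities need not be rational.
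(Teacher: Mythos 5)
Your reduction follows the paper's strategy quite closely: use Flenner to force the singular locus into codimension two, pull back $T_X$ to a log resolution, compare determinants to conclude that every exceptional divisor has discrepancy $-1$ and that $\pi^*T_X\simeq T_{\tilde X}(-\log E)$, cut down to a surface, and identify the exceptional curve as either an elliptic curve or a cycle of smooth rational curves. (One caveat on the reduction itself: the paper does not cut $X$ and then claim $T_S$ is locally free via the normal bundle sequence --- that step is delicate on a singular variety; instead it cuts the resolution $Y$ by preimages of general hyperplanes and propagates the \emph{triviality} of the logarithmic cotangent sheaf using the residue exact sequence of Lemma~\ref{lemma:exact_sequence}, shrinking the base so that the conormal subbundle splits off. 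Your version of this step would need repair along those lines.)

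The genuine gap is the one you flag yourself: you do not derive the final contradiction, and that final step is the actual content of the paper. The missing idea is the Camacho--Sad index formula (Theorem~\ref{camacho-sad formula}) applied to the pencil of foliations $\sL_\lambda=\sO_S(\vec v_1+\lambda\vec v_2)$ generated by the two trivializing logarithmic vector fields. In the elliptic case a single nowhere-vanishing member of the pencil makes $C$ a leaf disjoint from the singular locus of the foliation, so Camacho--Sad gives $C^2=0$, contradicting negative definiteness. In the cycle case, for generic $\lambda$ the foliation $\sL_\lambda$ has singular locus exactly the nodes $p_1,\ldots,p_r$ of the cycle, the indices at a node on the two branches are reciprocal, and Camacho--Sad on each component gives $\CS(\sL_\lambda,C_i,p_i)+\CS(\sL_\lambda,C_i,p_{i+1})=e_i$; chaining these around the cycle yields the continued-fraction identity $\Phi_{e_1,\ldots,e_r}(x_\lambda)=x_\lambda$ for infinitely many $\lambda$, hence as an identity of rational functions, and likewise for every permutation of the $e_i$. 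The arithmetic Lemma~\ref{lemma:zero} then forces $e_1=\cdots=e_r=-1$, which contradicts negative definiteness because $(C_1+C_2)^2=-2+2\,C_1\cdot C_2\ge 0$. Without this (or some substitute for it), the proof does not close: as you observe, the numerical data from the residue sequences alone is consistent with the configuration, so no contradiction can be extracted at that level.
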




We remark that the results hold as well for singularities of complex analytic spaces, and algebraic varieties defined over a field of characteristic zero. 

\medskip

The paper is organized as follows. In section \ref{preliminaries} we study entire solutions of a particular system of polynomial equations.
In Section~\ref{zl_klt} we review basic definitions of singularities of pairs, and the notion of canonical desingularization.
In section~\ref{CS_formula} we recall the Camacho-Sad formula, and provide applications to surfaces with trivial 
logarithmic tangent sheaf (see Proposition \ref{prop:main}).
The proof of Theorem \ref{thm:main} occupies section~\ref{proof}.

\medskip

\noindent {\bf Notation and conventions.}
Throughout this paper, we work over the field of complex numbers.
Varieties are always assumed to be irreducible and reduced.
We denote by $\textup{Sing}(X)$ the singular locus of a variety $X$. If $X$ is a variety, we denote by
$T_X$ the tangent sheaf of $X$.

\section{Preliminaries}\label{preliminaries}
Let $r$ be a positive integer, and let $e_1,\ldots,e_r$ be complex numbers. We define the rational function $\Phi_{e_1,\ldots,e_r}$
by the formula
$$\Phi_{e_1,\ldots,e_r}(x)=\cfrac{1}{e_r-\cfrac{1}{e_{r-1}-\cfrac{1}{\cdots-\cfrac{1}{e_{2}-\cfrac{1}{e_1-x}.}}}}$$

Notice that
\stepcounter{thm}
\begin{equation}\label{equ:def}
\Phi_{e_1,\ldots,e_r}(x)=\cfrac{1}{e_r-\Phi_{e_1,\ldots,e_{r-1}}(x)}.
\end{equation}

Write $$\Phi_{e_1,\ldots,e_r}(x)=
\frac{a_r(e_1,\ldots,e_r) x+b_r(e_1,\ldots,e_r)}{c_r(e_1,\ldots,e_r) x+d_r(e_1,\ldots,e_r)}$$
where $a_r,b_r,c_r,d_r\in\bC[z_1,\ldots,z_r]$. 

Suppose  that $r\ge 2$. Then, by \eqref{equ:def}, we have
\stepcounter{thm}
\begin{equation}
\left\lbrace
\begin{array}{rcl}
a_r(z_1,\ldots,z_r) & = & c_{r-1}(z_1,\ldots,z_{r-1}) \\
b_r(z_1,\ldots,z_r) & = & d_{r-1}(z_1,\ldots,z_{r-1}) \\
c_r(z_1,\ldots,z_r) & = & z_r c_{r-1}(z_1,\ldots,z_{r-1})-a_{r-1}(z_1,\ldots,z_{r-1})\\
d_r(z_1,\ldots,z_r) & = & z_r d_{r-1}(z_1,\ldots,z_{r-1})-b_{r-1}(z_1,\ldots,z_{r-1}),
\end{array}
\right.
\end{equation}

\stepcounter{thm}
\begin{equation}
\left\lbrace
\begin{array}{rcl}
a_1(z_1) & = & 0 \\
b_1(z_1) & = & 1 \\
c_1(z_1) & = &  -1\\
d_1(z_1) & = & z_1  
\end{array}
\right.
\text{ and }
\left\lbrace
\begin{array}{rcl}
a_2(z_1,z_2) & = & -1 \\
b_2(z_1,z_2) & = & z_1 \\
c_2(z_1,z_2) & = &  -z_2\\
d_2(z_1,z_2) & = & z_2z_1-1.  
\end{array}
\right.
\end{equation}

Thus, for any $r\ge 3$, we obtain
\stepcounter{thm}
\begin{equation}\label{eq:rec}
\left\lbrace
\begin{array}{rcl}
b_r(z_1,\ldots,z_{r}) & = &z_{r-1} b_{r-1}(z_1,\ldots,z_{r-1})-b_{r-2}(z_1,\ldots,z_{r-2})\\
b_1(z_1) & = & 1\\
b_2(z_1,z_2) & = & z_1.
\end{array}
\right.
\end{equation}

Note that 
$b_r\in\bC[z_1,\ldots,z_{r-1}]$.

\medskip

We denote by $\mathbb{S}_r$ the symmetric group on $r$ letters.
The proof of Theorem \ref{thm:main2} makes use of the following elementary result.

\begin{lemma}\label{lemma:zero}
Let $r$ be a positive integer, and let $e_1,\ldots,e_r$ be integers. Suppose that for any $s\in\mathbb{S}_r$
$\Phi_{e_{s(1)},\ldots,e_{s(r)}}(x)=x$ as rational functions. Then 
$e_1=\cdots=e_r$, and $e_1\in\{-1,0,1\}$.
\end{lemma}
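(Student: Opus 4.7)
The plan is to recast the hypothesis as a matrix identity in $\mathrm{SL}_2(\bZ)$. Define
$$T_e := \begin{pmatrix} 0 & 1 \\ -1 & e \end{pmatrix} \in \mathrm{SL}_2(\bZ).$$
The recursion for $a_r, b_r, c_r, d_r$ given earlier is precisely the matrix relation $\begin{pmatrix} a_r & b_r \\ c_r & d_r \end{pmatrix} = T_{e_r} \begin{pmatrix} a_{r-1} & b_{r-1} \\ c_{r-1} & d_{r-1} \end{pmatrix}$, which, together with the base case $r = 1$, gives by induction
$$M_r(e_1,\ldots,e_r) := \begin{pmatrix} a_r & b_r \\ c_r & d_r \end{pmatrix}(e_1,\ldots,e_r) = T_{e_r} T_{e_{r-1}} \cdots T_{e_1}.$$
The equality of rational functions $\Phi_{e_1,\ldots,e_r}(x) = x$ amounts to $c_r x^2 + (d_r - a_r) x - b_r \equiv 0$, i.e.\ $c_r = b_r = 0$ and $a_r = d_r$; combined with $\det M_r = 1$, this forces $M_r = \pm I$.

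Next, fix distinct indices $i, j \in \{1,\ldots,r\}$ and choose any permutation $s \in \mathbb{S}_r$ with $s(1) = j$ and $s(2) = i$. Applying the hypothesis to $s$ and to $s \circ (1\;2)$ gives two products both equal to $\pm I$ which differ only in the order of their two rightmost factors; cancelling the common left part yields
$$T_{e_i} T_{e_j} = \pm T_{e_j} T_{e_i}.$$
An explicit multiplication gives
$$T_a T_b = \begin{pmatrix} -1 & b \\ -a & ab - 1 \end{pmatrix}, \qquad T_b T_a = \begin{pmatrix} -1 & a \\ -b & ab - 1 \end{pmatrix}.$$
The minus sign is ruled out (it would demand $-1 = 1$ in the $(1,1)$ entry), and the plus sign forces $a = b$. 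Varying $i$ and $j$ yields $e_1 = \cdots = e_r =: e$.

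The hypothesis then reduces to $T_e^r = \pm I$. The characteristic polynomial of $T_e$ is $\lambda^2 - e \lambda + 1$, so its eigenvalues lie on the unit circle iff $|e| \le 2$. For $|e| \ge 3$, $T_e$ is hyperbolic and of infinite order; for $e = \pm 2$, $T_e$ has $\pm 1$ as a double eigenvalue but is not scalar, hence is a nontrivial parabolic element, again of infinite order. Only $e \in \{-1, 0, 1\}$ survives, which is exactly the conclusion.

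The main obstacle is spotting the matrix encoding that converts the iterated continued-fraction expression into a product in $\mathrm{SL}_2(\bZ)$; once it is in place, the adjacent-transposition trick in step two and the classification of finite-order elements in step three are both very short.
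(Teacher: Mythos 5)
Your proof is correct, and while the first two steps are essentially the paper's argument in matrix clothing, the last step takes a genuinely different route. The matrix encoding itself is a repackaging of what the paper already has: the displayed recursion for $(a_r,b_r,c_r,d_r)$ is literally left multiplication by your $T_{e_r}$, and your adjacent-transposition computation $T_aT_b=\pm T_bT_a\Rightarrow a=b$ is the matrix version of the paper's step, which swaps $e_{r-1}$ and $e_r$ in the continued fraction and compares the two resulting expressions for $\Phi_{e_1,\ldots,e_{r-2}}$ to get $e_{r-1}=e_r$. Where you diverge is in pinning down $e\in\{-1,0,1\}$. The paper does this arithmetically: it treats $r=2$ and $r=3$ by hand, and for $r\ge 4$ picks the smallest $k$ with $b_k(e,\ldots,e)=0$, uses the three-term recursion to get $e\,b_{k-1}=b_{k-2}$, and reduces modulo $e$ to conclude that $e$ divides $b_{k-2}(0,\ldots,0)\in\{-1,0,1\}$. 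You instead note that $T_e^r=\pm I$ forces $T_e$ to have finite order in $\mathrm{SL}_2(\bZ)$, and the trace criterion (hyperbolic for $|e|\ge 3$, nontrivial parabolic for $|e|=2$) excludes $|\mathrm{tr}\,T_e|=|e|\ge 2$. Your version is uniform in $r$, avoids the separate small cases and the auxiliary choice of $k$, and makes the appearance of $\{-1,0,1\}$ (i.e.\ $|\mathrm{tr}|\le 1$) conceptually transparent; the paper's divisibility argument is more elementary but more case-ridden. One sentence worth adding for completeness: for $r=1$ the commutation step is vacuous, but $T_{e_1}=\pm I$ is impossible (its $(1,1)$ entry is $0$), so the hypothesis cannot hold and the statement is vacuously true, consistent with the paper's observation that necessarily $r\ge 2$.
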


\begin{proof}
Notice that
$\Phi_{e_1,\ldots,e_r}(x)=x$ (as rational functions) if and only if 
$a_r(e_1,\ldots,e_r)=d_r(e_1,\ldots,e_r)$, $b_r(e_1,\ldots,e_r)=0$, and $c_r(e_1,\ldots,e_r)=0$. Note also that we must have
$r\ge 2$. 

If $r=2$, then  
$b_2(e_1,e_2)=e_1=0$, and $c_2(e_1,e_2)=-e_2=0$. Thus
$e_1=e_2=0$.

Suppose that $r\ge 3$. Then
$$\cfrac{1}{e_r-\cfrac{1}{e_{r-1}-\Phi_{e_1,\ldots,e_{r-2}}(x)}}=\Phi_{e_1,\ldots,e_r}(x)=x,$$
and hence
$$\Phi_{e_1,\ldots,e_{r-2}}(x)=\frac{(1-e_{r-1}e_r)x+e_{r-1}}{1-e_rx}.$$ Now,
by replacing $(e_1,\ldots,e_{r-2},e_{r-1},e_r)$ with
$(e_1,\ldots,e_{r-2},e_{r},e_{r-1})$, we obtain
$$\Phi_{e_1,\ldots,e_{r-2}}(x)=\frac{(1-e_{r-1}e_r)x+e_{r}}{1-e_{r-1}x}$$ This yields $e_{r-1}=e_r$. This easily implies that
$e_1=\cdots=e_r$.

Suppose that $r=3$. Then $b_3(e_1,e_2,e_3)=e_1e_2-1=e_1^2-1=0$.
Therefore, either 
$e_1=e_2=e_3=1$, or $e_1=e_2=e_3=-1$.

Suppose that $r\ge 4$. 
We assume from now on that $e_1\neq 0$. Hence $e_i\neq 0$ for all $i\in\{1,\ldots,r\}$. 

Let $2 \le k\le r$ be an integer such that 
$b_k(e_1,\ldots,e_k)=0$, and
$b_{k-1}(e_1,\ldots,e_{k-1})\neq 0$
(recall that $b_1(z_1)=1$).

If $k=2$, then $e_1=0$, yielding a contradiction.
Thus $k\ge 3$, and by 
\eqref{eq:rec}, we must have
$$e_{k-1} b_{k-1}(e_1,\ldots,e_{k-1})=b_{k-2}(e_1,\ldots,e_{k-2}).$$
Thus $e_{k-1} $ divides $b_{k-2}(e_1,\ldots,e_{k-2})$, and since
$e_1=\cdots=e_{k-2}$, $e_{k-1} $ divides $b_{k-2}(0,\ldots,0)$.
On the other hand,  
$b_{k-2}(0,\ldots,0)\in\{-1,0,1\}$ by \eqref{eq:rec} again. 
Thus $e_k\in\{-1,0,1\}$,
completing the proof of
Lemma \ref{lemma:zero}.
\end{proof}

\begin{rem}$\Phi_{0,\ldots,0}(x)=x$ if and only if $r\equiv 0 \,(\modulo\,2)$, $\Phi_{1,\ldots,1}(x)=x$ if and only if 
$r\equiv 0 \,(\modulo\,3)$, and $\Phi_{-1,\ldots,-1}(x)=x$ if and only if 
$r\equiv 0 \,(\modulo\,3)$.
\end{rem}

\section{Canonical desingularizations, and the Zariski-Lipman conjecture for klt spaces}\label{zl_klt}


\begin{say}[Logarithmic tangent sheaf]
Let $Y$ be a nonsingular variety of dimension $n \ge 1$, and
$\Delta \subset Y$ a \textit{divisor with simple normal crossings}. That is, $\Delta$
is an effective divisor and its local equation at an arbitrary point $y \in Y$ decomposes in the local ring $\sO_y$
into a product $y_1 \cdots y_k$, where $y_1,\ldots,y_k$ form part of a
regular system of parameters $(y_1,\ldots,y_n)$ of $\sO_y$. 
Let  
$$
T_Y (- \log \Delta) \subseteq T_Y = \Der_{\bC}(\sO_Y)
$$ 
be the subsheaf consisting of those derivations that
preserve the ideal sheaf $\sO_Y(-\Delta)$. One easily checks that the 
\textit{logarithmic tangent sheaf} $T_Y(-\log \Delta)$ is a locally free
sheaf of Lie subalgebras of $T_Y$, having the same restriction to $Y \setminus \Delta$, and hence the same rank $n$.

If $\Delta$ is defined at $y$ by the equation $y_1\cdots y_k=0$ as above,
then a local basis of $T_Y(- \log \Delta)$ (after localization at $y$) consists of
$$
y_1 \partial_1, \ldots, y_k \partial_k, \partial_{k+1}, \ldots, 
\partial_n,
$$ 
where $(\partial_1, \ldots, \partial_n)$ is the local basis of $T_Y$
dual to the local basis $(dy_1, \ldots, dy_n)$ of $\Omega^1_Y$.

A local computation shows that $T_Y(-\log \Delta)$ can
be identified with the subsheaf of $T_Y$ containing those vector fields that are tangent to $\Delta$ at smooth points of $\Delta$.

The dual of $T_Y(-\log \Delta)$ is the sheaf 
$\Omega^1_Y( \log \Delta)$ of
logarithmic differential $1$-forms, that is, of rational $1$-forms 
$\alpha$
on
$Y$ such that $\alpha$ and $d\alpha$ have
at most simple poles along $\Delta$. 

The top
exterior power $\wedge^n T_Y(-\log \Delta)$ is the invertible sheaf
$\sO_Y(-K_Y - \Delta)$, where $K_Y$ denotes a canonical divisor.

We will need the following observation.
\begin{lemma}\label{lemma:exact_sequence}
Let $Y$ be a smooth variety of dimension at least $2$, and $\Delta \subset Y$ a divisor with simple normal crossings.
If $H\subsetneq Y$ is a smooth hypersurface such that 
$\Delta\cap H$ is smooth of pure codimension $2$ in $X$, then there is an exact sequence
$$0\to \sN_{H/Y}^*\to {\Omega_Y^{1}(\log\Delta)}_{|H} 
\to \Omega_{H}^{1}(\log{\Delta}_{|H}) \to 0.$$
\end{lemma}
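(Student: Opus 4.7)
The plan is to construct the two maps explicitly and then verify exactness stalkwise using local coordinates adapted to the SNC hypothesis. The injection $\sN_{H/Y}^* \hookrightarrow {\Omega_Y^1(\log \Delta)}_{|H}$ arises as the composition of the standard conormal inclusion $\sN_{H/Y}^* \hookrightarrow {\Omega_Y^1}_{|H}$ with the restriction to $H$ of the natural inclusion $\Omega_Y^1 \hookrightarrow \Omega_Y^1(\log \Delta)$. For the second map, since $\Delta \cap H$ has pure codimension $2$ in $Y$, $H$ is not contained in any irreducible component of $\Delta$; hence a logarithmic $1$-form on $(Y, \Delta)$ restricts to a rational form on $H$ with at most logarithmic poles along $\Delta_{|H}$, yielding a well-defined $\sO_H$-linear map ${\Omega_Y^1(\log \Delta)}_{|H} \to \Omega_H^1(\log \Delta_{|H})$. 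The composite $\sN_{H/Y}^* \to \Omega_H^1(\log \Delta_{|H})$ is clearly zero, since $\sN_{H/Y}^*$ is generated by classes of local equations of $H$, whose differentials vanish on $H$.

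To check exactness it then suffices to argue stalkwise. At a point $y \in H \setminus \Delta$ the sequence reduces to the classical conormal sequence. At $y \in \Delta \cap H$, the smoothness of $\Delta \cap H$ of pure codimension $2$ in $Y$, combined with the SNC hypothesis on $\Delta$, lets me choose a regular system of parameters $(y_1, \ldots, y_n)$ of $\sO_{Y,y}$ so that $H = \{y_n = 0\}$ and $\Delta$ is defined by $y_1 \cdots y_k = 0$ for some $1 \le k \le n-1$, with $(\overline{y_1}, \ldots, \overline{y_{n-1}})$ a regular system of parameters of $\sO_{H,y}$ and $\Delta_{|H}$ defined by $\overline{y_1} \cdots \overline{y_k} = 0$. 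In these coordinates ${\Omega_Y^1(\log \Delta)}_{|H}$ admits the $\sO_{H,y}$-basis given by the restrictions of $\frac{dy_1}{y_1},\ldots,\frac{dy_k}{y_k},dy_{k+1},\ldots,dy_n$, while $\Omega_H^1(\log \Delta_{|H})$ admits the basis $\frac{d\overline{y_1}}{\overline{y_1}},\ldots,\frac{d\overline{y_k}}{\overline{y_k}},d\overline{y_{k+1}},\ldots,d\overline{y_{n-1}}$. The restriction map sends the first $n-1$ basis elements to their obvious counterparts and kills the restriction of $dy_n$. Surjectivity is then immediate, and the kernel is the free rank-one $\sO_{H,y}$-module generated by the restriction of $dy_n$, which coincides with the image of $\sN_{H/Y}^*$ under the conormal inclusion.

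The only real content is producing the simultaneous normal form above. This reduces to verifying that the hypotheses force $H$ to meet every component of $\Delta$ transversally at each $y \in H \cap \Delta$: if $H$ coincided with a component of $\Delta$, or were tangent to some component at $y$, then $\Delta \cap H$ would fail to be smooth of pure codimension $2$ at $y$. Once transversality is in hand, the existence of the adapted regular system of parameters follows from the standard argument used to produce SNC coordinates, and the remainder is the routine local calculation above; so I do not anticipate a serious obstacle beyond careful bookkeeping.
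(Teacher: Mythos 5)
Your proposal is correct and follows essentially the same route as the paper: the paper defines the same two maps (the composite $\sN_{H/Y}^*\to {\Omega_Y^{1}}_{|H}\to {\Omega_Y^{1}(\log\Delta)}_{|H}$ and the restriction map to $\Omega_{H}^{1}(\log{\Delta}_{|H})$) and then asserts exactness by "a local computation," which is precisely the coordinate calculation you carry out explicitly. Your write-up simply supplies the details the paper leaves to the reader.
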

\begin{proof}
Consider the composite map 
$\alpha\colon \sN_{H/Y}^*\to {\Omega_Y^{1}}_{|H}
\to {\Omega_Y^{1}(\log\Delta)}_{|H}$, and the morphism
$\beta\colon {\Omega_Y^{1}(\log\Delta)}_{|H}\to \Omega_{H}^{1}(\log{\Delta}_{|H})$
induced by the restriction map.
A local computation shows that $\alpha$ and $\beta$ yield 
an exact sequence as claimed. 
\end{proof}

\end{say}

\begin{say}[{Singularities of pairs}]
We recall some definitions of singularities of pairs, developed in the context of the Minimal Model Program.

\begin{defn}[See {\cite[section 2.3]{kollar_mori}}]\label{Singularities of pairs}
Let $X$ be a normal variety, and
$B=\sum a_iB_i$ an effective $\bQ$-divisor on $X$, i.e., $B$ is  a nonnegative $\bQ$-linear combination 
of distinct prime Weil divisors $B_i$'s on $X$. 
Suppose that $K_X+B$ is $\bQ$-Cartier, i.e., some nonzero multiple of it is a Cartier divisor. 

Let $\pi:Y\to X$ be a log resolution of the pair $(X,B)$. 
This means that $Y$ is a smooth
variety, $\pi$ is a birational projective morphism whose exceptional set $\Exc(\pi)$ is of pure codimension one,
and the divisor $\sum E_i+\pi^{-1}_*B$ has simple normal crossings, where the $E_i$'s are the irreducible components of 
$\Exc(\pi)$.
There are uniquely defined rational numbers $a(E_i,X,B)$'s such that
$$
K_{Y}+\pi^{-1}_*B = \pi^*(K_X+B)+\sum_{E_i}a(E_i,X,B)E_i.
$$
The $a(E_i,X,B)$'s do not depend on the log resolution $\pi$,
but only on the valuations associated to the $E_i$'s. 

Let
$$\discrep(X,B)=\inf_E\{a(E,X,B)\}$$
where $E$ runs through all the prime exceptional divisors of all projective birational morphisms. Then, either  
$\discrep(X,B)=-\infty$, or 
$-1\le \discrep(X,B)\le 1$. If $X$ is smooth, then
$\discrep(X,0)=1$.

We say that $(X,B)$ is \emph{log terminal} (or \emph{klt})  if  all $a_i<1$, and, for some  log resolution 
$\pi:Y\to X$ of $(X,B)$, $a(E_i,X,B)>-1$ 
for every $\pi$-exceptional prime divisor $E_i$.
We say that $(X,B)$ is \emph{log canonical} if  all $a_i\le 1$, and, for some  log resolution 
$\pi:Y\to X$ of $(X,B)$, $a(E_i,X,B)\ge -1$ 
for every $\pi$-exceptional prime divisor $E_i$.
If these conditions hold for some log resolution of $(X,B)$, then they hold for every  
log resolution of $(X,B)$. Moreover, 
$(X,B)$ is log canonical (respectively, klt) if and only if
$\discrep(X,B)\ge -1$ (respectively, $\discrep(X,B)> -1$ and all $a_i<1$).

We say that $X$ is klt (respectively  log canonical) if so is $(X,0)$.
\end{defn}
\end{say}

\begin{say}[Canonical desingularization]
In the proofs of Theorems \ref{thm:main} and \ref{thm:zl_klt}, we will consider a suitable resolution of singularities, whose existence is guaranteed by the following theorem.

\begin{thm}[{\cite[Theorems 3.35, and 3.45]{kollar07} and \cite[Corollary 4.7]{greb_kebekus_kovacs10}}]
\label{thm:canonical_resolution}
Let $X$ be a normal variety. Then there exists a resolution of singularities
$\pi:Y\to X$ such that 
\begin{enumerate}
\item $\pi$ is an isomorphism over $X\setminus\textup{Sing}(X)$, and
\item $\pi_* T_Y(-\log\Delta)\simeq T_X$ where $\Delta$ is the largest
reduced divisor contained  in $\pi^{-1}(\Sing(X)$. 
\end{enumerate}
\end{thm}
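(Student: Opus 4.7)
The plan is to take $\pi$ to be a \emph{functorial} (canonical) resolution of singularities and then derive (2) from its equivariance properties. First I would invoke the functorial embedded resolution of Hironaka, in the form developed by Bierstone--Milman and W{\l}odarczyk and recorded in \cite[Theorems 3.35 and 3.45]{kollar07}, to build a projective birational morphism $\pi\colon Y\to X$ with $Y$ smooth such that $\pi$ is an isomorphism over the smooth locus of $X$ and $\Exc(\pi)$ has simple normal crossings support. This immediately yields (1); moreover, since $\pi$ is an isomorphism off $\Sing(X)$, the preimage $\pi^{-1}(\Sing(X))$ coincides with the exceptional set, so the divisor $\Delta$ appearing in (2) is the reduced exceptional divisor.

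Turning to (2), the natural restriction map $\pi_*T_Y(-\log\Delta)\to T_X$ is an isomorphism over $U:=X\setminus\Sing(X)$, and it is injective globally since both sheaves are torsion-free and the source is a subsheaf of $\pi_*T_Y$. Because $X$ is normal, $T_X$ is reflexive and satisfies $T_X=j_*(T_X|_U)$ for the inclusion $j\colon U\into X$, so to conclude it is enough to show that every derivation on an open set $V\subseteq X$ lifts to a derivation of $\sO_{\pi^{-1}(V)}$ that preserves the ideal of $\Delta$.

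The key step invokes equivariance. A derivation $D$ of $\sO_X$ on $V$ is the same datum as an automorphism $\mathrm{id}+\varepsilon D$ of $V\times_\bC\Spec\bC[\varepsilon]/(\varepsilon^2)$ over $\Spec\bC[\varepsilon]/(\varepsilon^2)$ restricting to the identity at $\varepsilon=0$. Because the canonical resolution algorithm is equivariant under base change by infinitesimal automorphisms of $X$, this automorphism lifts uniquely to an automorphism of $\pi^{-1}(V)\times_\bC\Spec\bC[\varepsilon]/(\varepsilon^2)$, which is precisely the datum of a derivation $\widetilde D$ of $\sO_{\pi^{-1}(V)}$. The same equivariance forces $\widetilde D$ to preserve the canonically defined divisor $\Delta$, so $\widetilde D\in\Gamma(\pi^{-1}(V),T_Y(-\log\Delta))$, and by construction its image in $T_X(V)$ is $D$. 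The main obstacle is establishing this equivariant lifting for first-order infinitesimal automorphisms, which is nontrivial because the base-change map $V_\varepsilon\to V$ is only flat and not smooth; this is exactly the content of \cite[Corollary 4.7]{greb_kebekus_kovacs10}, which distils the necessary compatibility from the Bierstone--Milman/W{\l}odarczyk construction. Granted that result, no further calculation is required.
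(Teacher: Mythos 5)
The paper offers no proof of this statement at all: it is quoted from the literature, with the existence of the resolution attributed to \cite[Theorems 3.35 and 3.45]{kollar07} and the push-forward property (2) to \cite[Corollary 4.7]{greb_kebekus_kovacs10}. Your write-up is therefore not in competition with an argument in the paper; what it does is reconstruct, in outline, how the cited proof goes, and that reconstruction is essentially sound. The reductions you perform yourself are correct: $\pi^{-1}(\Sing(X))=\Exc(\pi)$ because $Y$ is smooth and $\pi$ is an isomorphism over the smooth locus; the natural map $\pi_*T_Y(-\log\Delta)\to T_X$ is injective with torsion-free cokernel and an isomorphism in codimension zero on $X$, so by reflexivity of $T_X$ on the normal variety $X$ the whole statement reduces to lifting local derivations of $\sO_X$ to logarithmic derivations upstairs. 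You then correctly locate the entire remaining difficulty in the equivariance of the canonical resolution algorithm and delegate it to \cite[Corollary 4.7]{greb_kebekus_kovacs10} --- exactly the reference the paper itself leans on, so no gap is introduced relative to what the paper provides.

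One caveat on the mechanism you sketch for that last step. Functoriality of the canonical resolution is established for smooth morphisms of varieties and hence for actions of algebraic groups; it does not formally apply to the non-reduced base change $V\times_{\bC}\Spec\bC[\varepsilon]/(\varepsilon^2)\to V$, which is flat but not smooth and leaves the category of varieties. The arguments in the literature therefore do not literally resolve $X_\varepsilon$; rather, they exploit the fact that each blowup center produced by the algorithm is canonically defined, hence invariant under every derivation, and that a derivation preserving the center of a blowup lifts to the blowup and preserves the resulting exceptional divisor (in the spirit of Seidenberg). Your ``infinitesimal automorphism'' formulation is a reasonable heuristic for this, and you are explicit that the rigorous content is the cited corollary, so I regard the proposal as a faithful, if schematic, account of the quoted result rather than a flawed proof.
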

Notice that $\Supp(\Delta)=\Exc(\pi)$. In particular, $\Delta$ has simple normal crossings. We call a resolution $\pi$ as in Theorem~\ref{thm:canonical_resolution} a 
\emph{canonical desingularization} of $X$. 

\end{say}

\begin{say}[Zariski-Lipman conjecture for klt spaces]

Recall from \cite{greb_kebekus_kovacs_peternell10} that the Zariski-Lipman conjecture holds for klt spaces (see also \cite[Corollary 5.7]{fano_foliation} for related results). 
We reproduce the proof from \cite[Corollary 5.7]{fano_foliation} for the reader's convenience.
\begin{thm}[\cite{greb_kebekus_kovacs_peternell10}]\label{thm:zl_klt}
Let $X$ be a klt space such that the tangent sheaf $T_X$ is locally free. Then $X$ is smooth.
\end{thm}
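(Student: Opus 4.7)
The plan is to derive a contradiction from the assumption that $X$ is not smooth by exploiting the canonical desingularization of Theorem~\ref{thm:canonical_resolution}. Let $\pi\colon Y \to X$ be such a desingularization, and let $\Delta$ denote the largest reduced divisor contained in $\pi^{-1}(\Sing X)$, so that $\pi_*T_Y(-\log \Delta) \simeq T_X$ and $\Supp(\Delta) = \Exc(\pi)$. I assume $\Delta$ is nonempty and seek a contradiction.

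Since $T_X$ is locally free of rank $n = \dim X$, the determinant $\det T_X \simeq \sO_X(-K_X)$ is invertible, so $K_X$ is Cartier. Shrinking $X$ to a trivializing open, I choose a frame $\partial_1, \ldots, \partial_n$ of $T_X$ and set $\sigma := \partial_1 \wedge \cdots \wedge \partial_n$, a nowhere vanishing section of $\sO_X(-K_X)$. The isomorphism $\pi_*T_Y(-\log \Delta) \simeq T_X$ lifts the $\partial_i$ uniquely to global sections $v_1, \ldots, v_n$ of $T_Y(-\log \Delta)$ on $Y$, producing
\[
\omega := v_1 \wedge \cdots \wedge v_n \in H^0\bigl(Y,\, \det T_Y(-\log \Delta)\bigr) = H^0\bigl(Y,\, \sO_Y(-K_Y - \Delta)\bigr).
\]

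Writing $K_Y = \pi^*K_X + \sum_E a_E E$, with $E$ ranging over the $\pi$-exceptional prime divisors, the klt hypothesis gives $a_E > -1$; since both $K_Y$ and $\pi^*K_X$ are Cartier, the $a_E$ are integers, hence $a_E \ge 0$. Thus $K_Y + \Delta - \pi^*K_X = \sum_E (a_E + 1)E$ is an effective divisor with every coefficient at least one, providing a natural inclusion of line bundles
\[
\sO_Y(-K_Y - \Delta) \hookrightarrow \sO_Y(-\pi^*K_X)
\]
under which sections of the source are identified with sections of the target that vanish along $\sum_E (a_E + 1)E$. On the other hand, $\pi$ is an isomorphism on $Y \setminus \Delta$ and each $v_i$ restricts there to the pullback of $\partial_i$, so $\omega$ and the pullback section $\pi^*\sigma \in H^0\bigl(Y,\, \sO_Y(-\pi^*K_X)\bigr)$ agree on the dense open $Y \setminus \Delta$. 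Consequently the image of $\omega$ under the above inclusion coincides with $\pi^*\sigma$ globally. But $\pi^*\sigma$ is nowhere vanishing, whereas the image of $\omega$ must vanish along the strictly positive divisor $\sum_E (a_E + 1)E$: a contradiction. Therefore $\Delta = \emptyset$, and $X$ is smooth.

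The main point requiring care is the sheaf-theoretic identification of $\omega$, viewed through the natural inclusion, with the pullback $\pi^*\sigma$. This amounts to a local compatibility that I would verify by a short explicit computation in analytic coordinates at a general point of a component of $\Delta$, using the local basis $(y_1\partial_1, \partial_2, \ldots, \partial_n)$ of $T_Y(-\log \Delta)$ and the local generator $y_1\partial_1 \wedge \partial_2 \wedge \cdots \wedge \partial_n$ of $\det T_Y(-\log \Delta)$.
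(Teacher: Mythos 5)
Your proof is correct and follows essentially the same route as the paper: both compare $\det T_Y(-\log\Delta)\simeq\sO_Y(-K_Y-\Delta)$ with $\pi^*\det T_X\simeq\pi^*\sO_X(-K_X)$ via the evaluation map coming from $\pi_*T_Y(-\log\Delta)\simeq T_X$ on a canonical desingularization. You merely unwind the paper's injective sheaf map $\pi^*\sO_X(-K_X)\hookrightarrow\sO_Y(-K_Y-\Delta)$ (which directly forces $a(\Delta_i,X)\le -1$, contradicting klt) into an explicit comparison of the sections $\omega$ and $\pi^*\sigma$, extracting the same contradiction from the other side of the discrepancy inequality.
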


\begin{proof}We assume to the contrary that $\Sing(X)\neq\emptyset$.
Let $\pi:Y\to X$ be 
a canonical desingularization of $X$, and let 
$\Delta$ be the largest
reduced divisor contained in $\pi^{-1}(\Sing(X))$. Note that 
$\Delta\neq 0$ since $\Sing(X)\neq\emptyset$.
Consider the morphism of vector bundles
$$\pi^* T_X\simeq \pi^* (\pi_* T_Y(-\log\Delta)) \to T_Y(-\log\Delta),$$ 
where $\pi^* (\pi_* T_Y(-\log\Delta)) \to T_Y(-\log\Delta)$ is the evaluation map.
It induces an injective map of sheaves 
$$\pi^*\sO_X(-K_X)\simeq \pi^* \det(T_X) \hookrightarrow \det(T_Y(-\log\Delta))\simeq\sO_Y(-K_Y-\Delta).$$
This implies that $a(\Delta_i,X)\le-1$ for any irreducible component $\Delta_i$ of $\Delta$, yielding a contradiction
and
completing the proof Theorem \ref{thm:zl_klt}.
\end{proof}
\end{say}

\begin{rem}We have the following reformulation of Theorem \ref{thm:zl_klt}. Let $X$ be a variety such that the tangent sheaf $T_X$ is locally free. If $X$ is not smooth, then $\discrep(X)\in\{-\infty,-1\}$.
\end{rem}

\section{The Camacho-Sad formula}\label{CS_formula}

\begin{say}[Foliations]
A (singular) foliation on a smooth complexe analytic surface $S$ is a locally free subsheaf $\sL\subsetneq T_S$ of rank $1$ such that 
the corresponding twisted vector field $\vec{v}\in H^0(X,T_S\otimes\sL^{\otimes -1})$ has isolated zeroes. Its singular locus $\Sing(\sL)$ is the zero locus of $\vec{v}$.
Considering the natural perfect pairing $\Omega_S^1\otimes \Omega_S^1\to \omega_S$, we see that $\sL\subsetneq T_S$ 
gives rise to a twisted $1$-form with isolated zeroes
$\omega\in H^0(X,\Omega_S^1\otimes\sM)$ with 
$\sM=\omega_S\otimes\sL$. Conversely, given a 
twisted $1$-form $\omega\in H^0(X,\Omega_S^1\otimes\sM)$ with isolated zeroes, we define a foliation
as the kernel of the morphism $T_S \to \sL$
given by the contraction with $\omega$.
\end{say}

\begin{say}[Camacho-Sad formula]
Let $C\subset S$ be a compact $\sL$-invariant curve, and
let $p\in C\cap \Sing(\sL)$. Let $\omega$ be a local $1$-form defining $\sL$ in a neighborhood of $p$, and let $f$ be a local equation of $C$ at $p$. Then there exist nonzero local functions $g$ and $h$, and a local $1$-form $\eta$ such that
$f$ and $h$ are relatively prime
and $g\omega=hdf+f\eta$ (see \cite[Lemma 1.1]{suwa}). Following 
\cite{suwa}, we set 
$$\CS(\sL,C,p)=-\Res_p\,\frac{1}{h}\eta_{|C}.$$ The right hand side depends only on $\sL$ and $C$.

\begin{exmp}\label{ex:cs}
Let 
$(x,y)$ be local coordinates at $p$.
Suppose that $\sL$ is given by the local
$1$-form $\omega=\lambda x(1+o(1))dy-\mu y(1+o(1))dx$ with 
$\mu\neq0$. Set $p=(0,0)$, and let $C$ be the invariant curve defined by $x=0$. Then 
$\CS(\sL,C,p)=\frac{\lambda}{\mu}$.
\end{exmp}

We can now state the Camacho-Sad formula (see \cite[Theorem 2.1]{suwa}, see also \cite{camacho_sad}).

\begin{thm}[Camacho-Sad formula]\label{camacho-sad formula}
Let $\sL$ be a foliation on a smooth complex analytic surface $S$, and let $C\subset S$ be a compact $\sL$-invariant curve. Then
$$C^2=\sum_{p\in C\cap \Sing(\sL)}\CS(\sL,C,p).$$
\end{thm}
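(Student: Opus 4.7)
The plan is to realize $C^2 = \deg \sN_{C/S}$ as the trace of a Čech--de Rham class built from the local foliation data, and to recognise the Camacho--Sad indices as local residues of a meromorphic primitive on the compact curve $C$.

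First, cover a neighborhood of $C$ in $S$ by small pairwise disjoint bidisks $U_1,\ldots,U_n$ centred at the points $p_1,\ldots,p_n$ of $C\cap\Sing(\sL)$, together with an open neighborhood $U_0$ of $C\setminus\{p_1,\ldots,p_n\}$ on which $\sL$ is regular. On every $U_\alpha$ choose a local equation $f_\alpha$ of $C$, a local defining $1$-form $\omega_\alpha$ of $\sL$, and invariance data $g_\alpha\omega_\alpha = h_\alpha df_\alpha + f_\alpha\eta_\alpha$ with $(f_\alpha,h_\alpha)$ coprime. On $U_0$, since $\sL$ is regular and $C$ is a leaf, we can straighten the foliation so that $\omega_0=df_0$, i.e.\ $g_0=h_0=1$ and $\eta_0=0$.

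Next, compare these choices on overlaps. Writing $f_\beta=u_{\alpha\beta}f_\alpha$ and $\omega_\beta=c_{\alpha\beta}\omega_\alpha$ with $u_{\alpha\beta},c_{\alpha\beta}$ nowhere zero, substitute into both invariance relations and use $df_\beta = u_{\alpha\beta}df_\alpha + f_\alpha\,du_{\alpha\beta}$. Reducing the resulting identity modulo $f_\alpha$ first yields the scalar relation $g_\beta c_{\alpha\beta}h_\alpha = g_\alpha h_\beta u_{\alpha\beta}$ on $C$, and then comparing the $\Omega^1_C$-components of the equation produces the key cocycle identity
$$
\frac{\eta_\alpha}{h_\alpha}\bigg|_C - \frac{\eta_\beta}{h_\beta}\bigg|_C \;=\; d\log u_{\alpha\beta}\big|_C
$$
on each nonempty $C\cap U_\alpha\cap U_\beta$.

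Finally, apply the residue theorem. The Čech $1$-cocycle $\{d\log u_{\alpha\beta}|_C\}$ represents $c_1$ of the line bundle on $C$ with transitions $u_{\alpha\beta}|_C$, namely $\sO_S(-C)|_C\simeq\sN_{C/S}^{-1}$, and therefore computes $-C^2$ under the trace isomorphism $H^1(C,\Omega^1_C)\simeq\bC$. The identity above displays the $0$-cochain of meromorphic $1$-forms $\{\eta_\alpha/h_\alpha|_C\}$ as a primitive of this cocycle, holomorphic on $U_0\cap C$ (since $\eta_0=0$) and meromorphic with a single pole at $p_i$ on each $U_i\cap C$ ($i\ge 1$). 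The residue theorem on $C$ (or on its normalisation, if $C$ is singular) yields
$$
-C^2 \;=\; \sum_{i=1}^{n}\mathrm{Res}_{p_i}\!\left(\frac{\eta_i}{h_i}\bigg|_C\right) \;=\; -\sum_{i=1}^{n}\CS(\sL,C,p_i),
$$
which is the Camacho--Sad formula.

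The main obstacle is the overlap computation producing the cocycle identity: one has to juggle both invariance relations together with $df_\beta = u_{\alpha\beta}df_\alpha + f_\alpha du_{\alpha\beta}$, extract the auxiliary scalar relation on $C$, and carefully combine the two to clean up all scalar factors. Singular points of $C$ cause no difficulty since $\CS(\sL,C,p)$ is defined branch by branch and $C^2$ decomposes accordingly.
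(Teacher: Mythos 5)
The paper does not prove this statement at all: it is quoted from the literature (Suwa, Theorem~2.1, going back to Camacho--Sad), so there is no internal proof to compare yours against. What you have written is, in substance, the standard \v{C}ech--de Rham/residue-theoretic proof given in the cited reference, and it is essentially correct: the overlap computation does yield the scalar relation $g_\beta c_{\alpha\beta}h_\alpha=g_\alpha h_\beta u_{\alpha\beta}$ on $C$ and the cocycle identity $\frac{\eta_\alpha}{h_\alpha}|_C-\frac{\eta_\beta}{h_\beta}|_C=d\log u_{\alpha\beta}|_C$ (one first deduces from $(kh_\alpha-uh_\beta)\,df_\alpha\equiv 0$ mod $f_\alpha$ that $f_\alpha$ divides $kh_\alpha-uh_\beta$, then restricts the $1$-form part to $C$ where $df_\alpha|_C=0$), and the signs are consistent with $\CS=-\Res$ and $\{u_{\alpha\beta}\}$ being transition functions for $\sN_{C/S}^{*}$. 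Two points should be tightened. First, you cannot in general straighten the foliation globally on a single neighborhood $U_0$ of $C\setminus\{p_1,\ldots,p_n\}$ (holonomy along the leaf obstructs a global first integral); you should instead refine $U_0$ into small charts on each of which one may take $h_\alpha=g_\alpha=1$ and $\eta_\alpha=0$, which is all the argument needs, namely that the cochain $\{\eta_\alpha/h_\alpha|_C\}$ is holomorphic away from the $p_i$. Second, the final step is not literally ``the residue theorem'' (the $\eta_\alpha/h_\alpha|_C$ do not glue to a global meromorphic form); it is the standard fact that if a $0$-cochain of meromorphic $1$-forms cobounds the cocycle $\{d\log t_{\alpha\beta}\}$ of a line bundle $L$ on a compact curve, then the sum of its residues equals $\deg L$ --- this deserves a sentence of justification (e.g.\ via a meromorphic connection on $L$, or by comparing with the Dolbeault representative of $c_1(L)$). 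With those repairs, and with the residues at singular points of $C$ interpreted on the normalisation as you indicate, the proof is complete and agrees with the source the paper cites.
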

\end{say}

\begin{say}Here we give some applications of the Camacho-Sad formula. The following two are immediate consequences of Theorem \ref{camacho-sad formula}, of independent interest.

\begin{lemma}\label{surface_degree}
Let $S$ be smooth surface, $\sQ$ a line bundle on $S$, and $T_S\twoheadrightarrow \sQ$ a surjective map of sheaves.
Let $C\subset X$ be a smooth complete connected curve of genus $g$.
If $\deg_C(\sQ)< 2-2g$, then $g=0$, and
$\deg_C(\sQ)=0$.
\end{lemma}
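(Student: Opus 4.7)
The plan is to reinterpret the surjection $T_S\twoheadrightarrow \sQ$ as a nonsingular foliation and then apply the Camacho-Sad formula. I would set $\sL:=\ker(T_S\to\sQ)$. Because $\sQ$ is a \emph{line bundle}, the sequence $0\to\sL\to T_S\to\sQ\to 0$ is locally split, so $\sL$ is a rank-one subbundle of $T_S$ and the corresponding twisted vector field $\vec{v}\in H^0(S,T_S\otimes\sL^{-1})$ has no zeroes at all. Thus $\sL$ qualifies as a foliation on $S$ in the sense of Section~\ref{CS_formula}, with $\Sing(\sL)=\emptyset$.

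The next step is to show that $C$ is $\sL$-invariant. I would consider the composite
$$T_C\hookrightarrow T_S|_C\twoheadrightarrow \sQ|_C,$$
which is a morphism of line bundles on $C$; if it were nonzero, it would produce a nonzero section of $T_C^{-1}\otimes\sQ|_C$, forcing $\deg_C(\sQ)\ge \deg T_C = 2-2g$ and contradicting the hypothesis. Hence the composite vanishes, so $T_C\subseteq\sL|_C$ as subsheaves of $T_S|_C$. Both are line subbundles of the rank-two bundle $T_S|_C$, so in fact $T_C=\sL|_C$, and the two quotients $\sN_{C/S}=T_S|_C/T_C$ and $\sQ|_C=T_S|_C/\sL|_C$ coincide. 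In particular
$$C^2=\deg_C(\sN_{C/S})=\deg_C(\sQ).$$

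Finally I would invoke the Camacho-Sad formula (Theorem~\ref{camacho-sad formula}) on the $\sL$-invariant curve $C$: since $\Sing(\sL)=\emptyset$, the sum on the right-hand side is empty and $C^2=0$. Combined with the identity above this gives $\deg_C(\sQ)=0$, and the strict inequality $0<2-2g$ forces $g=0$. No step here is a serious obstacle; the only subtle point is the identification $\sN_{C/S}\simeq\sQ|_C$, which relies on $\sL\hookrightarrow T_S$ being a genuine subbundle rather than merely a saturated subsheaf with torsion cokernel somewhere, and this is exactly where the hypothesis that $\sQ$ is locally free enters in an essential way.
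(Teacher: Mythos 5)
Your proof is correct and follows essentially the same route as the paper: take $\sL=\ker(T_S\to\sQ)$, use the degree hypothesis to kill the composite $T_C\to T_S|_C\to\sQ|_C$ so that $C$ is a leaf of the regular foliation $\sL$ with $\sN_{C/S}\simeq\sQ|_C$, and apply the Camacho--Sad formula to get $C^2=\deg_C(\sQ)=0$. The only difference is that you spell out details (local splitness, the identification $T_C=\sL|_C$) that the paper leaves implicit.
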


\begin{proof}
Let $\sL$ be the kernel of $T_S\twoheadrightarrow \sQ$. 
Suppose that $\deg_C(\sQ)< 2-2g$. Then the composite map
$T_C \to {T_S}_{|C} \to \sQ_{|C}$ is the zero map, and hence
$T_C=\sL_{|C}\subsetneq {T_S}_{|C}$, and $\sN_{C/S}\simeq\sQ_{|C}$. In particular, 
$C$ is a leaf of the regular foliation by curves $\sL\subsetneq T_S$. By the Camacho-Sad formula (see Theorem \ref{camacho-sad formula}), we must have $\deg_C(\sQ)=C^2=0$. This completes the proof of Lemma \ref{surface_degree}.
\end{proof}

\begin{cor}\label{cor:surface_degree}
Let $S$ be smooth surface, $\sQ$ a line bundle on $S$, and $T_S\twoheadrightarrow \sQ$ a surjective map of sheaves.
Let $C\subset X$ be a smooth complete connected curve of genus $g\le 1$. Then $\deg_C(\sQ)\ge 0$.
\end{cor}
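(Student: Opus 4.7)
The plan is to deduce the corollary directly from Lemma \ref{surface_degree} by a contrapositive argument, splitting into the two cases $g=0$ and $g=1$.

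Assume for contradiction that $\deg_C(\sQ)<0$. In either case $g=0$ or $g=1$, we have $2-2g\ge 0 > \deg_C(\sQ)$, so the hypothesis of Lemma \ref{surface_degree} is satisfied. The lemma then forces $g=0$ and $\deg_C(\sQ)=0$. In the case $g=1$, this already gives a contradiction with $g=1$. In the case $g=0$, the conclusion $\deg_C(\sQ)=0$ contradicts our assumption $\deg_C(\sQ)<0$. Either way we get a contradiction, so $\deg_C(\sQ)\ge 0$.

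The only subtlety is making sure the hypothesis $\deg_C(\sQ)<2-2g$ of the lemma is available: this is automatic because $g\le 1$ ensures $2-2g\ge 0$, and the contradiction assumption gives $\deg_C(\sQ)<0\le 2-2g$. There is no real obstacle here; the corollary is essentially a restatement of the lemma in the range $g\le 1$, and the only content is noticing that the lemma's conclusion (which pins down $\deg_C(\sQ)=0$) is incompatible with a strictly negative degree.
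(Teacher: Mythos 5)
Your proof is correct and matches the paper's intent: the corollary is stated without proof as an immediate consequence of Lemma \ref{surface_degree}, and your contrapositive argument (assuming $\deg_C(\sQ)<0$, noting $2-2g\ge 0$ for $g\le 1$ so the lemma applies, and deriving a contradiction from its conclusion $\deg_C(\sQ)=0$) is exactly the intended deduction.
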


The next result is crucial for the proof of Theorem \ref{thm:main}.

\begin{prop}\label{prop:main}
Let $S$ be smooth surface, and let $C\subset S$ be a 
possibly reducible complete curve with simple normal crossings.
If $T_S(-\log C)\simeq\sO_S\oplus\sO_S$, then 
the intersection matrix of irreducible components of $C$ is not negative definite.
\end{prop}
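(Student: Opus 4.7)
The plan is to argue by contradiction: assume the intersection matrix of $C$ is negative definite. Since $T_S(-\log C)\simeq\sO_S^{\oplus 2}$, taking determinants yields $\sO_S\simeq\det T_S(-\log C)\simeq\sO_S(-K_S-C)$, so $K_S+C\sim 0$. Intersecting with any component $C_i$ and applying adjunction gives
\begin{equation*}
\sum_{j\neq i} C_j\cdot C_i \;=\; 2-2g(C_i).
\end{equation*}
As the left-hand side counts the nodes of $C$ on $C_i$ (by the SNC hypothesis), we must have $g(C_i)\in\{0,1\}$: genus-one components are disjoint from the rest of $C$, while genus-zero components meet the rest in exactly two nodes. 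Under negative-definiteness, each connected component of $C$ is therefore either (a) an isolated elliptic curve $E$ with $E^2<0$, or (b) a cycle of smooth rational curves $C_1,\ldots,C_r$ with nodes $p_i:=C_i\cap C_{i+1}$ (cyclic indexing, $r\ge 2$).

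For case (a), the local basis $x\partial_x,\partial_y$ of $T_S(-\log E)$ near $E=\{x=0\}$ produces an exact sequence $0\to\sO_E\to T_S(-\log C)|_E\to T_E\to 0$. Since the restrictions of a trivializing frame $v_1,v_2$ form a frame of $T_S(-\log C)|_E\simeq\sO_E^{\oplus 2}$, at least one of $v_1|_E,v_2|_E$ projects to a nonzero constant section of $T_E\simeq\sO_E$ (otherwise both $v_j|_E$ would lie in the rank-one subsheaf $\sO_E$). That section is nowhere vanishing, so the chosen $v$ has no zeros on $E$ as a section of $T_S$; hence $\sL_v$ is regular along $E$ and the Camacho-Sad formula gives $E^2=0$, contradicting $E^2<0$. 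For case (b), the analogous sequence on each $C_i\simeq\bP^1$ is $0\to\sO_{C_i}\to T_S(-\log C)|_{C_i}\to T_{C_i}(-p_{i-1}-p_i)\simeq\sO_{C_i}\to 0$. For $v=\alpha v_1+\beta v_2$ in general position, the image of $v|_{C_i}$ is a nonzero constant section, so $v$ vanishes on $C_i$ exactly at the two nodes. At each node, $v=A\,x\partial_x+B\,y\partial_y$ with $A(p_i),B(p_i)\neq0$, and Example~\ref{ex:cs} gives
\begin{equation*}
\CS(\sL_v,C_i,p_i)\cdot\CS(\sL_v,C_{i+1},p_i) \;=\; 1.
\end{equation*}
Setting $x_i=\CS(\sL_v,C_i,p_i)$ and applying Camacho-Sad to each $C_i$ yields $e_i=x_i+1/x_{i-1}$; cyclic closure, translated via $y_i:=1/x_i$, is precisely $\Phi_{e_1,\ldots,e_r}(y_0)=y_0$ from Section~\ref{preliminaries}.

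As the coefficients $[\alpha:\beta]\in\bP^1$ parametrizing $v=\alpha v_1+\beta v_2$ vary, the map $v\mapsto y_0(v)$ is a Möbius transformation and takes infinitely many values, so $\Phi_{e_1,\ldots,e_r}(x)-x$ vanishes identically as a rational function of $x$. The main obstacle is to convert this identity into a contradiction with negative-definiteness. The identity is equivalent to the monodromy product $A:=\prod_{i}\bigl(\begin{smallmatrix}0&1\\-1&e_i\end{smallmatrix}\bigr)\in SL_2(\bZ)$ being $\pm I$. I would expect to close the argument in one of two ways: either by invoking Lemma~\ref{lemma:zero} (after upgrading the cyclic and orientation-reversal symmetries of the cycle to the full $\mathbb{S}_r$-invariance in the lemma's hypothesis, which is the delicate point), or by a direct $SL_2(\bZ)$ argument---for a cycle of rational curves with negative-definite intersection matrix (a, possibly degenerate, Hirzebruch cusp), it is classical that the monodromy has trace of absolute value $\ge 2$ and is never $\pm I$. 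Either path yields the desired contradiction.
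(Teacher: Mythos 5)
Your proposal tracks the paper's proof almost step for step: the same reduction via $\det T_S(-\log C)\simeq\sO_S(-K_S-C)\simeq\sO_S$ and adjunction to the dichotomy ``isolated elliptic curve or cycle of rational curves''; the same Camacho--Sad argument killing the elliptic case; and the same derivation, by varying $v$ in the trivializing frame and using that a M\"obius function takes infinitely many values, of the identity $\Phi_{e_1,\ldots,e_r}(x)=x$. All of that is correct. The problem is the endgame: you derive the functional equation and then write that you ``would expect to close the argument in one of two ways,'' without carrying out either. That is a genuine gap --- the contradiction with negative definiteness is exactly the content of the proposition, and neither of your two candidate routes is executed. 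Route (i) is the paper's own route: it invokes Lemma~\ref{lemma:zero}, whose hypothesis is $\Phi_{e_{s(1)},\ldots,e_{s(r)}}(x)=x$ for \emph{every} $s\in\mathbb{S}_r$, and whose proof genuinely uses the transposition of the last two entries. You are right to flag this as the delicate point: the geometry of the cycle only yields the dihedral symmetries (change of base point and of orientation), and these give nothing new, since a cyclic rotation conjugates the monodromy product and the reversal is obtained from it by transposition, so $\prod_i\bigl(\begin{smallmatrix}0&1\\-1&e_i\end{smallmatrix}\bigr)=\pm I$ is automatically invariant under both. So you cannot simply ``upgrade'' to $\mathbb{S}_r$-invariance, and you do not claim to.

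Route (ii), the direct $SL_2(\bZ)$ argument, is in my view the right way to finish and would in fact make the proof more robust than the paper's, but as written it is an assertion, not a proof. To make it work you need: for a negative-definite cycle one first contracts the $(-1)$-components (using negative definiteness to rule out two adjacent $(-1)$'s, so the result is again an SNC cycle, with the degenerate cases $r=2,3$ treated by hand), observes via the identity $\bigl(\begin{smallmatrix}a&-1\\1&0\end{smallmatrix}\bigr)\bigl(\begin{smallmatrix}b&-1\\1&0\end{smallmatrix}\bigr)=\bigl(\begin{smallmatrix}a+1&-1\\1&0\end{smallmatrix}\bigr)\bigl(\begin{smallmatrix}1&-1\\1&0\end{smallmatrix}\bigr)\bigl(\begin{smallmatrix}b+1&-1\\1&0\end{smallmatrix}\bigr)$ that blowing down a node does not change the monodromy product, and then quotes (or proves) that for $b_i=-e_i\ge 2$ the product is parabolic or hyperbolic, hence never $\pm I$. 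None of this is hard, but it is several steps of real content that your write-up replaces with ``it is classical.'' As it stands the proposal is an accurate reconstruction of the paper's strategy with the concluding contradiction left unproved.
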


\begin{proof}
Let $C'$ be a connected component of 
$C$, and set $C''=C\setminus C'$. Then, up to replacing $S$ by $S\setminus C''$, we may assume that $C$ is a connected curve.

We denote the irreducible components of $C$ by $C_1,\ldots,C_r$ ($r\ge 1$). Recall that
the dual graph $\Gamma$ of $C$ is defined as follows. The vertices of $\Gamma$ are the curves $C_i$, and for $i\neq j$, the vertices $C_i$ and $C_j$ are connected by $C_i\cdot C_j$ edges.

We argue by contradiction and assume
that 
the intersection matrix
$\{C_i\cdot C_j\}_{i,j}$
is negative definite.

Notice that $\sO_S(K_S)\simeq\sO_S(-C)$ since
$\det(T_S(-\log C))\simeq\sO_S(-K_S-C)$, and
$T_S(-\log C)\simeq\sO_S\oplus\sO_S$.
By the adjunction formula, for 
$1\le i\le r$, we have
$$\sO_{C_i}(K_{C_i})\simeq
{\sO_{S}(K_S+C_i)}_{|C_i}
\simeq
\sO_{C_i}(-\sum_{j\neq i}{C_j}_{|C_i}),$$
and hence
$$\deg_{C_i}(\sO_{C_i}(K_{C_i})= 2g(C_i)-2=-\sum_{j\neq i}\deg_{C_i}(\sO_{C_i}({C_j}_{|C_i}))
\le 0.$$
Thus, one of the following holds.
\begin{enumerate}
\item Either $C$ is irreducible, and $g(C)=1$, or
\item $r\ge 2$, $C_i\simeq\bP^1$ for all $1\le i\le r$ and the dual graph of $C$ is a cycle.
\end{enumerate}

Suppose first that $C$ is irreducible with $g(C)=1$. 
Recall that there is a surjective map of sheaves
$T_S(-\log C)\twoheadrightarrow T_{C}$. On the other hand,
$T_S(-\log C)\simeq\sO_S\oplus\sO_S$.
Hence, there exists
a non-zero global vector field 
$\vec{v}\in H^0(S,T_S(-\log C))\subseteq H^0(S,T_S)$
such that $\vec{v}_{|C}\neq 0$. Since
$T_C\simeq\sO_C$, $\vec{v}(s)\neq 0$ for any $s\in C$.
Set $\sL=\sO_S\vec{v}\subsetneq T_S$. Then, $C$ is a complete
$\sL$-invariant curve, disjoint from the singular locus  $\Sing(\sL)$.
Thus, by the Camacho-Sad formula (see Theorem \ref{camacho-sad formula}), we must have $C^2=0$, yielding a contradiction.

Suppose that $r\ge 2$, $C_i\simeq\bP^1$ for any $1\le i\le r$ and that the dual graph of $C$ is a cycle. 
If $r=2$, then $C_1\cap C_2=\{p_1,p_2\}$ with $p_1\neq p_2$.
Suppose that $r\ge 3$. By renumbering the $C_i$'s if necessary, we may assume that for each $i\in\{1,\ldots,r\}$, 
$C_i$ meets $\overline{C\setminus C_i}$
in $p_i\in C_{i-1}$ and $p_{i+1}\in C_{i+1}$, where
$C_{r+1}=C_1$. Note that $p_{r+1}=p_1$

Let $\vec{v}_k\in H^0(S,T_S(-\log C))\subseteq H^0(S,T_S)$ for $k\in \{1,2\}$ such that 
$T_S(-\log C)\simeq\sO_S\vec{v}_1\oplus\sO_S\vec{v}_2.$
Let $\lambda\in\bC$. Set $\vec{v}_\lambda=\vec{v}_1+\lambda\vec{v}_2$, and
$\sL_\lambda=\sO_S\vec{v}_\lambda\subseteq T_S$.

Set $C_0=C_r$. Fix $i\in \{1,\ldots,r\}$, and let 
$(x_i,y_i)$ be local coordinates at $p_i$ such that $x_i$ (respectively, $y_{i}$) is a local equation of $C_{i-1}$ (respectively, $C_{i}$) at $p_i$. Then $x_i\partial_{x_i}$
and $y_i\partial_{y_i}$ are local generators of $T_S(-\log C)$ at $p_i$. 
Therefore, 
there exist
local functions $a_i,b_i,c_i,d_i$
at $p_i$
such that the matrix 
$$
\begin{pmatrix}
a_i & b_i\\
c_i & d_i
\end{pmatrix}
$$
is invertible, and such that
\begin{equation*}
\left\lbrace
\begin{array}{rcl}
\vec{v}_1 & = & a_i(x_i,y_i)x_i\partial_{x_i}+b_i(x_i,y_i)y_i\partial_{y_i},\\ 
\vec{v}_2 & = &
c_i(x_i,y_i)x_i\partial_{x_i}+d_i(x_i,y_i)y_i\partial_{y_i}.
\end{array}
\right.
\end{equation*}
Thus
$$\vec{v}_\lambda=(a_i(x_i,y_i)+\lambda c_i(x_i,y_i))x_i\partial_{x_i}+(b_i(x_i,y_i)+\lambda d_i(x_i,y_i))y_i\partial_{y_i},$$
and a local generator for $\sL_\lambda$ is given 
by the $1$-form 
$$
\begin{array}{rcl}
\omega_\lambda & = & (a_i(x_i,y_i)+\lambda c_i(x_i,y_i))x_idy_i-(b_i(x_i,y_i)+\lambda d_i(x_i,y_i))y_idx_i \\
& = & (a_i(p_i)+\lambda c_i(p_i))x_i(1+o(1))dy_i
-(b_i(p_i)+\lambda d_i(p_i))y_i(1+o(1))dx_i.
\end{array}
$$
This implies that for
$\lambda\in\bC\setminus\{-\frac{b_1(p_1)}{d_1(p_1)},-\frac{a_1(p_1)}{c_1(p_1)},\ldots,-\frac{b_r(p_r)}{d_r(p_r)},-\frac{a_r(p_r)}{c_r(p_r)}\}$
\begin{enumerate}
\item $\vec{v}_\lambda$ vanishes exactly at $\{p_1,\ldots,p_r\}$; \item $\CS(\sL_\lambda,C_{i-1},p_i)=\frac{a_i(p_i)+\lambda c_i(p_i)}{b_i(p_i)+\lambda d_i(p_i)}$ (see Example \ref{ex:cs});
\item $\CS(\sL_\lambda,C_{i},p_i)=\frac{b_i(p_i)+\lambda d_i(p_i)}{a_i(p_i)+\lambda c_i(p_i)}$ (see Example \ref{ex:cs}).
\end{enumerate}
In particular, $\sL_\lambda\subsetneq T_S$ is a foliation by curves on $S$,
$\Sing(\sL_\lambda)=\{p_1,\ldots,p_r\}$, and
$\CS(\sL_\lambda,C_i,p_i)=\frac{1}{\CS(\sL_\lambda,C_{i+1},p_i)}$.

Set $e_i=C_i^2\in\bZ$. Notice that for each $i\in\{1,\ldots,r\}$,
we have
$$
\begin{array}{rcll}
\CS(\sL_\lambda,C_{i+1},p_{i+1}) & = & \frac{1}{\CS(\sL_\lambda,C_i,p_{i+1})} & \text{by } (3)\\
& = & \frac{1}{e_i-\CS(\sL_\lambda,C_i,p_{i})} & 
\text{by the Camacho-Sad formula}\\
\end{array}
$$

Set $x_\lambda=\CS(\sL_\lambda,C_1,p_1)=\frac{a_1(p_1)+\lambda c_1(p_1)}{b_1(p_1)+\lambda d_1(p_1)}$. Then
\begin{multline*}
x_\lambda=\CS(\sL_\lambda,C_1,p_1)=
\CS(\sL_\lambda,C_{r+1},p_{r+1})
=\frac{1}{e_r-\CS(\sL_\lambda,C_r,p_{r})}\\
=\frac{1}{e_r-
\frac{1}{e_{r-1}-\CS(\sL_\lambda,C_{r-1},p_{r-1})}}
=\cdots
=\cfrac{1}{e_r-\cfrac{1}{e_{r-1}-\cfrac{1}{\cdots-\cfrac{1}{e_{2}-\cfrac{1}{e_1-\CS(\sL_\lambda,C_1,p_1)}}}}}=
\Phi_{e_1,\ldots,e_r}(x_\lambda)
\end{multline*}
for any $\lambda\in\bC\setminus\{-\frac{b_1(p_1)}{d_1(p_1)},-\frac{a_1(p_1)}{c_1(p_1)},\ldots,-\frac{b_r(p_r)}{d_r(p_r)},-\frac{a_r(p_r)}{c_r(p_r)}\}$.

This implies that 
$$\Phi_{e_1,\ldots,e_r}(x)=x$$
as rational functions. And similarly,
if $s\in \mathbb{S}_r$, then
$$\Phi_{e_{s(1)},\ldots,e_{s(r)}}(x)=x.$$
By Lemma \ref{lemma:zero}, we must have $e_1=\cdots=e_r=-1$, yielding a contradiction since $(C_1+C_2)^2=-2+2C_1\cdot C_2\ge 0$.
This completes the proof of Proposition \ref{prop:main}.
\end{proof}

\end{say}

\section{Proof of Theorem \ref{thm:main}}\label{proof}

We will use the following theorem to reduce to the surface case. 

\begin{thm}[{\cite[Corollary]{flenner}}]\label{thm:flenner}
Let $X$ be a variety such that the tangent sheaf $T_X$ is locally
free. If $\codim_X\Sing(X)\ge 3$, then $X$ is smooth.
\end{thm}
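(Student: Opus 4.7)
The plan is to deduce smoothness from the reflexivity (equivalently, Serre's condition $S_2$) of $\Omega_X^1$, exploiting the freeness of $T_X$ together with the codimension hypothesis. First I would invoke the classical result of Lipman and Scheja--Storch that local freeness of $T_X$ forces $X$ to be normal; hence $X$ satisfies $R_1+S_2$, and combined with $\codim_X\Sing(X)\ge 3$ we obtain $R_2+S_2$. Since $T_X\cong\mathcal{O}_X^n$ is locally free, so is its dual $T_X^\vee\cong\mathcal{O}_X^n$. The natural map $\Omega_X^1\to T_X^\vee$ is an isomorphism over the smooth locus $U=X\setminus\Sing(X)$, and modulo torsion embeds $\Omega_X^1$ into $\mathcal{O}_X^n$ with cokernel supported on $\Sing(X)$, hence in codimension $\ge 3$.

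The heart of the argument is to prove that $\Omega_X^1$ itself satisfies $S_2$, and hence is reflexive. I would use a frame $(\delta_1,\dots,\delta_n)$ of the free module $T_X$ to build a Koszul-type complex related to the de Rham sequence, and leverage the codimension-$\ge 3$ hypothesis to force the vanishing of the local cohomology sheaves $\mathcal{H}^0_{\Sing X}(\Omega_X^1)$ and $\mathcal{H}^1_{\Sing X}(\Omega_X^1)$. Once $\Omega_X^1$ is reflexive, the inclusion $\Omega_X^1\hookrightarrow T_X^\vee$ between two reflexive sheaves that agree outside a subset of codimension $\ge 3$ must be an isomorphism, since reflexive sheaves on normal varieties are determined by their restriction to the complement of any closed subset of codimension $\ge 2$. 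Thus $\Omega_X^1\cong\mathcal{O}_X^n$, and this forces $X$ to be smooth.

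The main obstacle is the $S_2$ step for $\Omega_X^1$. The bound $\codim_X\Sing(X)\ge 3$ is essentially sharp for this approach: the depth estimate hinges on a Koszul computation that just barely survives at codimension $3$, and any weakening of the hypothesis breaks it. This is precisely the reason that extending the Zariski--Lipman conjecture past isolated singularities in higher dimension, i.e., to the codimension $2$ situation treated in the paper's main theorem, requires entirely different, surface-theoretic machinery via the Camacho--Sad formula (Section~\ref{CS_formula}).
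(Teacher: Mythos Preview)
The paper does not give a proof of Theorem~\ref{thm:flenner}; it is quoted from \cite{flenner} and invoked as a black box in the reduction step of Theorem~\ref{thm:main2}. There is thus no in-paper argument to compare your proposal against.

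Regarding the proposal on its own merits: the overall architecture is sound --- Lipman's normality result, reflexivity of $\Omega_X^1$, identification with the free sheaf $T_X^\vee$, and smoothness from local freeness of $\Omega_X^1$ --- but the decisive step, establishing that $\Omega_X^1$ satisfies $S_2$, is left as an unspecified ``Koszul-type complex related to the de Rham sequence.'' You do not say what this complex is or how the codimension-$3$ hypothesis feeds into a depth estimate for it; as written this is a placeholder rather than an argument. Flenner's actual route in \cite{flenner} is different in mechanism: he proves an extension theorem for reflexive $p$-forms across singularities of codimension $\ge p+2$ (via a local-cohomology vanishing for the reflexive differentials on a resolution), and the Zariski--Lipman statement in codimension $\ge 3$ falls out as a corollary. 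Your closing paragraph correctly identifies that the $S_2$ step is the obstacle and that codimension $3$ is the threshold for this style of argument, but flagging the obstacle is not the same as clearing it.
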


We are now in position to prove our main result. Notice that Theorem \ref{thm:main} is a immediate consequence of Theorem \ref{thm:main2} below.

\begin{thm}[Zariski-Lipman conjecture for log canonical pairs]\label{thm:main2}
Let $(X,B)$ be a log canonical pair such that the tangent sheaf $T_X$ is locally free. Then $X$ is smooth.
\end{thm}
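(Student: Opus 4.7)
The plan is to argue by induction on $n=\dim X$, reducing to the surface case, where Proposition \ref{prop:main} supplies the contradiction. As a preliminary reduction, I note that if $(X,B)$ is log canonical then $\discrep(X,0)\ge \discrep(X,B)\ge -1$, so $X$ itself is log canonical, and it suffices to prove Theorem \ref{thm:main}. Assume henceforth for contradiction that $\Sing(X)\neq\emptyset$.

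For the inductive step with $n\ge 3$: if $\codim_X\Sing(X)\ge 3$, then Theorem \ref{thm:flenner} already gives smoothness, contradicting $\Sing(X)\neq\emptyset$. Otherwise $\codim_X\Sing(X)=2$, and I cut by a general hyperplane section $H\subset X$ through a singular point. Bertini-adjunction yields that $H$ is log canonical, and $H$ is non-smooth since $H\cap\Sing(X)\neq\emptyset$; using the local freeness of $T_X$ and the restriction sequence $0\to T_H\to {T_X}_{|H}\to \sN_{H/X}\to 0$, a genericity argument shows that $T_H$ is locally free. The inductive hypothesis then applied to $H$ gives the contradiction.

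For the base case $n=2$: shrink $X$ around a chosen isolated singular point $p$ so that $T_X\simeq \sO_X^{\oplus 2}$; in particular $\det T_X \simeq \sO_X$, so $K_X\sim 0$. Let $\pi\colon Y\to X$ be a canonical desingularization (Theorem \ref{thm:canonical_resolution}) with exceptional SNC divisor $\Delta=\sum E_i$. The isomorphism $\pi_* T_Y(-\log\Delta)\simeq T_X$ yields, via adjunction, a map $\pi^* T_X\to T_Y(-\log\Delta)$ between locally free sheaves of rank $2$, which is injective since it is an isomorphism off $\Delta$. Taking determinants produces an injection
$$\sO_Y\simeq \pi^*\sO_X(-K_X)\hookrightarrow \sO_Y(-K_Y-\Delta),$$
so the $\pi$-exceptional divisor $-K_Y-\Delta=-\sum(a(E_i,X)+1)E_i$ is effective, forcing $a(E_i,X)\le -1$. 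Combined with the log canonicity bound $a(E_i,X)\ge -1$, one obtains $a(E_i,X)=-1$ for every $i$. Therefore the determinant map is an isomorphism, and the map $\pi^* T_X\to T_Y(-\log\Delta)$ is itself an isomorphism between locally free sheaves of rank $2$ with isomorphic determinants. In particular $T_Y(-\log\Delta)\simeq \sO_Y^{\oplus 2}$ on a neighborhood of $\pi^{-1}(p)$. Proposition \ref{prop:main} then yields that the intersection matrix of the components of $\pi^{-1}(p)$ is not negative definite, which contradicts Mumford's classical result on the negative definiteness of the exceptional divisor of the resolution of an isolated normal surface singularity.

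The hardest step is the cutting in the inductive reduction: verifying that for generic $H$ the tangent sheaf $T_H$ remains locally free. Since $T_X$ locally free does not force $\Omega_X^1$ locally free, the dualized conormal sequence need not be exact, and identifying the kernel of ${T_X}_{|H}\to \sN_{H/X}$ with $T_H$ at singular points of $X$ requires a separate genericity argument.
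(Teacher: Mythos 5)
Your base case ($n=2$) is essentially the paper's argument: the discrepancy computation forcing $a(E_i,X)=-1$, the resulting isomorphism $\pi^*T_X\simeq T_Y(-\log\Delta)$, and the contradiction between Proposition \ref{prop:main} and the negative definiteness of the exceptional intersection matrix. The preliminary reduction from $(X,B)$ to $X$ is also correct (though you should note that $K_X$ is Cartier because $\det T_X\simeq\sO_X(-K_X)$ is invertible, which is what makes $\discrep(X,0)$ defined at all). The problem is the inductive step.

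There you cut $X$ itself by a general hyperplane section $H$ and assert that ``a genericity argument shows that $T_H$ is locally free.'' This is a genuine gap, not a routine verification. The sheaf $T_H=\sHom_{\sO_H}(\Omega^1_H,\sO_H)$ is the kernel of $\sHom_{\sO_H}(\Omega^1_X|_H,\sO_H)\to\sHom_{\sO_H}(\sN^*_{H/X},\sO_H)$, and since $\Omega^1_X$ is \emph{not} locally free along $\Sing(X)$, the natural map $T_X|_H=\sHom_{\sO_X}(\Omega^1_X,\sO_X)|_H\to\sHom_{\sO_H}(\Omega^1_X|_H,\sO_H)$ need not be an isomorphism at the points of $H\cap\Sing(X)$ (which a general $H$ necessarily meets, as $\codim_X\Sing(X)=2$). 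So even granting surjectivity of $T_X|_H\to\sN_{H/X}$ at such points, the locally free kernel you obtain is not identified with $T_H$. You acknowledge this yourself but supply no argument, and none is known: if ``$T_X$ locally free descends to general hyperplane sections'' were available, the whole conjecture would reduce to surfaces with no log canonical hypothesis in higher dimension. The paper is structured precisely to avoid this: it performs the dimensional reduction \emph{upstairs} on the canonical desingularization $Y$, where the relevant sheaf $\Omega^1_Y(\log\Delta)\simeq\sO_Y^{\oplus\dim Y}$ is locally free and restricts well. Using Lemma \ref{lemma:exact_sequence} it shows that, after shrinking $X$, $\Omega^1_{H_1}(\log\Delta|_{H_1})$ is again trivial on $H_1=\pi^{-1}(G_1)$, and iterates down to a surface $S\subset Y$ with $T_S(-\log C)\simeq\sO_S^{\oplus 2}$ and $C$ exceptional over $T=\pi(S)$; Proposition \ref{prop:main} is then applied directly to $(S,C)$, never to a hyperplane section of the singular variety $X$. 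To repair your proof you would need to replace your inductive step by this log-resolution cutting (or prove the missing genericity statement, which is the hard part).
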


\begin{proof}Notice first that $K_X$ is Cartier since the tangent sheaf $T_X$ is locally free. This implies that $X$ is log canonical as well.

Let us assume to the contrary that $\Sing(X)\neq\emptyset$. By
Theorem \ref{thm:flenner}, we have $\codim_X\Sing(X)=3$.
By replacing $X$ with an affine open dense subset, we may assume that $X$ is affine, and that $\Sing(X)$ is irreducible of 
codimension 2. We may also assume without loss of generality that 
$T_X\simeq\sO_X^{\oplus\dim(X)}$.

Let $\pi:Y\to X$ be 
canonical desingularization of $X$, and let 
$\Delta$ be the largest
reduced divisor contained in $\pi^{-1}(\Sing(X))$. Note that
$\Delta\neq 0$. 
As in the proof of Theorem \ref{thm:zl_klt}, we
consider the morphism of vector bundles
$$\pi^* T_X \to T_Y(-\log\Delta),$$ and the induced injective map of sheaves 
$$\pi^*\sO_X(-K_X)\simeq \pi^* \det(T_X) \hookrightarrow \det(T_Y(-\log\Delta))\simeq\sO_Y(-K_Y-\Delta).$$
This yields $a(\Delta_i,X)\le-1$ for any irreducible component 
$\Delta_i$ of $\Delta$. Thus
that $a(\Delta_i,X)=-1$ since $X$ has log canonical singularities, and we have an isomorphism
$$\pi^* T_X \simeq T_Y(-\log\Delta).$$

Suppose that $\dim(X)\ge 3$. Let $G_1 \subset X$ be a general hyperplane section, and set 
$H_1=\pi^{-1}(G_1)\subset Y$. Then $G_1$ is a normal affine variety (see for instance \cite[Theorem 1.7.1]{beltrametti_sommese}). Moreover $H_1$ is smooth, and $\Delta\cap H_1$ has 
simple normal crossings by Bertini's Theorem. 
By Lemma \ref{lemma:exact_sequence},
there is an exact sequence
$$0\to \sN_{H_1/Y}^*\to {\Omega_Y^{1}(\log\Delta)}_{|H_1} 
\to \Omega_{H_1}^{1}(\log{\Delta}_{|H_1}) \to 0.$$
Note that $\sN_{H_1/Y}^*\simeq\pi^*\sN_{G_1/X}^*\simeq\sO_Y$.
Thus, there exist regular functions $g_1,\ldots,g_r$ on $G_1$ such that
the map
$\sO_Y\simeq\sN_{H_1/Y}^*\to 
{\Omega_Y^{1}(\log\Delta)}_{|H_1}\simeq {\sO_Y^{\oplus \dim(Y)}}_{|H_1}$
is given by $g_1\circ \pi_{|H_1},\ldots,g_r\circ \pi_{|H_1}$.
Let $i\in\{1,\ldots,r\}$ such that
${g_i}_{|H_1\cap \Sing(X)}\neq 0$. Then, by replacing $X$ 
with $X\setminus \{g_i=0\}$
if necessary, we may assume that 
$\Omega_{H_1}^{1}(\log{\Delta}_{|H_1})\simeq \sO_{H_1}^{\oplus \dim(H_1)}$ (and $\Delta\neq 0$).
Let $G_2,\ldots,G_{\dim(X)-2}\subset X$ be a general hyperplane sections, and set 
$H_i=\pi^{-1}(G_i)\subset Y$,
$S=H_1\cap\cdots\cap H_{\dim(X)-2}$, 
$C=\Delta\cap H_1\cdots\cap H_{\dim(X)-2}$,
and $T=G_1\cap\cdots\cap G_{\dim(X)-2}=\pi(S)$. Then $S$ is smooth, and $C$ has 
simple normal crossings. Proceeding by induction, we conclude that
by replacing 
$T$ with an appropriate open subset,
we may assume that 
$T_{S}(-\log C)\simeq \sO_{S}^{\oplus 2}$ (and $C\neq 0$). Observe that
the induced morphism
$\pi_{|S}\colon S\to T$ is birational with exceptional locus $C$. This implies that the intersection matrix of irreducible components of $C$ is negative definite. But this contradicts Proposition
\ref{prop:main}, completing the proof of Theorem \ref{thm:main2}.
\end{proof}


\providecommand{\bysame}{\leavevmode\hbox to3em{\hrulefill}\thinspace}
\providecommand{\MR}{\relax\ifhmode\unskip\space\fi MR }
\providecommand{\MRhref}[2]{%
  \href{http://www.ams.org/mathscinet-getitem?mr=#1}{#2}
}
\providecommand{\href}[2]{#2}

\end{document}